\newtheorem{thm}{Theorem}
\newtheorem{lemma}[thm]{Lemma}
\newtheorem{conjecture}[thm]{Conjecture}
\newtheorem{cor}[thm]{Corollary}
\newtheorem{defi}[thm]{Definition}
\theoremstyle{remark}
\newtheorem*{remark}{Remark}
\theoremstyle{definition}
\newtheorem{example}[thm]{Example}
\newcommand{\subalign}[1]{%
  \vcenter{%
    \Let@ \restore@math@cr \default@tag
    \baselineskip\fontdimen10 \scriptfont\tw@
    \advance\baselineskip\fontdimen12 \scriptfont\tw@
    \lineskip\thr@@\fontdimen8 \scriptfont\thr@@
    \lineskiplimit\lineskip
    \ialign{\hfil$\m@th\scriptstyle##$&$\m@th\scriptstyle{}##$\crcr
      #1\crcr
    }%
  }
}
\def\ZZ{\mathbb{Z}}
\def\QQ{\mathbb{Q}}
\def\QQ{\mathbb{Q}}
\def\p{\mathfrak{p}}
\begin{document}

\title{Circulant $q$-Butson Hadamard Matrices}

\author{Trevor Hyde}
\address{Dept. of Mathematics\\
University of Michigan \\
Ann Arbor, MI 48109-1043\\
}
\email{tghyde@umich.edu}

\author{Joseph Kraisler}
\address{Dept. of Mathematics\\
University of Michigan \\
Ann Arbor, MI 48109-1043\\
}
\email{jkrais@umich.edu}

\date{March 14th, 2017}

\maketitle

\begin{abstract}
If $q = p^n$ is a prime power, then a $d$-dimensional \emph{$q$-Butson Hadamard matrix} $H$ is a $d\times d$ matrix with all entries $q$th roots of unity such that $HH^* = dI_d$. We use algebraic number theory to prove a strong constraint on the dimension of a circulant $q$-Butson Hadamard matrix when $d = p^m$ and then explicitly construct a family of examples in all possible dimensions. These results relate to the long-standing circulant Hadamard matrix conjecture in combinatorics.
\end{abstract}


\section{Introduction}
For a prime power $q = p^n$, a \emph{$q$-Butson Hadamard matrix} ($q$-BH) of dimension $d$ is a $d\times d$ matrix $H$ with all entries $q$th roots of unity such that
\[
    H H^* = d I_d,
\]
where $H^*$ is the conjugate transpose of $H$.  A $d\times d$ matrix $H$ is said to be \emph{circulant} if 
\[
    H_{ij} = f(i-j)
\]
for some function $f$ defined modulo $d$. In this paper we investigate circulant $q$-BH matrices of dimension $d = p^m$.

\begin{thm}
\label{main1}
If $q = p^n$ is a prime power and $d = p^m$, then there exists a $d\times d$  circulant $q$-Butson Hadamard matrix if and only if $m \leq 2n$, with one exception when $(m,n,p) = (1,1,2)$.
\end{thm}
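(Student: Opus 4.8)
The plan is to pass from the matrix to its eigenvalues and then study one carefully chosen eigenvalue by $(1-\zeta)$-adic methods. Since $H$ is circulant it is diagonalized by the DFT, and $HH^* = dI_d$ is equivalent to $|f(\omega^j)|^2 = d = p^m$ for every $j$, where $f(x) = \sum_{k=0}^{d-1}\zeta^{a_k}x^k$, $\zeta = e^{2\pi i/p^n}$, $\omega = e^{2\pi i/p^m}$, and the $\zeta^{a_k}$ are the entries of the first row; equivalently the sequence $(a_k)$ has vanishing off-peak autocorrelation $\sum_k \zeta^{a_k-a_{k-r}}=0$ for $r\not\equiv 0$. I would prove the two implications separately: necessity of $m\le 2n$ by number theory, sufficiency by exhibiting sequences.

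For necessity, assume $m>n$ (the range of interest) and focus on $T := f(\omega) = \sum_k \omega^{k+p^{m-n}a_k}$, an algebraic integer in $\ZZ[\omega]=\ZZ[\zeta_{p^m}]$ with $|T|^2=p^m$. Because $p^m\in\QQ$ and the Galois group is abelian (so it commutes with complex conjugation), every conjugate of $T$ also has absolute value $p^{m/2}$; since $p$ is totally ramified, $(T)=\p^{t}$ with $t=v_\p(T)=m\varphi(p^m)/2$ for $\p=(1-\omega)$ (equivalently $T^2/p^m$ is a unit all of whose conjugates have modulus one, hence a root of unity by Kronecker). The key device is the ``base $p^{m-n}$'' expansion: write $T = \sum_{\rho=0}^{p^{m-n}-1}\omega^\rho T_\rho$ in the relative integral basis $\{1,\omega,\dots,\omega^{p^{m-n}-1}\}$ of $\ZZ[\zeta_{p^m}]$ over $\ZZ[\zeta_{p^n}]$ (using $\omega^{p^{m-n}}=\zeta$). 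Grouping the summands of $T$ by the residue $k+p^{m-n}a_k\equiv k \pmod{p^{m-n}}$ shows each digit $T_\rho=\sum_\tau c^{(\rho)}_\tau \zeta^\tau$ is a non-negative integer combination of $p^n$-th roots of unity of total mass exactly $p^n$.

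The crucial point is that the valuations $v_\p(\omega^\rho T_\rho)=\rho + p^{m-n}v_{(1-\zeta)}(T_\rho)$ are pairwise distinct modulo $p^{m-n}$, so there is no cancellation and $v_\p(T)=\min_\rho[\rho+p^{m-n}v_{(1-\zeta)}(T_\rho)]$; matching this against $t=m\varphi(p^m)/2 = p^{m-n}\cdot m\varphi(p^n)/2$ forces the minimum at $\rho=0$, with $v_{(1-\zeta)}(T_0)=m\varphi(p^n)/2$ and $T_0\ne 0$. I then bound this valuation from above: $v_{(1-\zeta)}(T_0)=v_p(N(T_0))$ for the norm $N$ to $\QQ$, and by the triangle inequality each conjugate obeys $|\sigma(T_0)|=\bigl|\sum_\tau c^{(0)}_\tau \zeta^{a\tau}\bigr|\le \sum_\tau c^{(0)}_\tau = p^n$, so $p^{m\varphi(p^n)/2}\le |N(T_0)|\le (p^n)^{\varphi(p^n)}$, whence $m\le 2n$. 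I expect the main obstacle to be exactly this chain: one must use $T=f(\omega)$ rather than the easier $f(1)$, since a single lower eigenvalue is provably insufficient, and one must see that the digit expansion simultaneously isolates a genuine mass-$p^n$ sum of roots of unity and kills all cross-cancellation.

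For sufficiency I would write down explicit sequences. When $m\le n$, $\zeta_{p^m}$ is itself a $q$-th root of unity, and a Chu-type sequence $a_k = p^{n-m}\binom{k}{2}$ (for $p$ odd) makes $\sum_k \zeta^{a_k-a_{k-r}}$ a complete geometric sum over $\ZZ/p^m$, hence $0$; for $n<m\le 2n$ I would use a Frank-type bilinear construction, writing $k=u+p^{m-n}w$ and setting $a_k$ to a suitable scalar multiple of $uw$, again reducing the autocorrelation to a vanishing character sum. The separate non-existence at $(m,n,p)=(1,1,2)$ falls out of the same analysis (for $p=2,\ m=1$ the forced identity $T^2=\pm 2$ has no solution in $\ZZ$), or directly from the two $2\times 2$ candidates. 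The remaining care is to handle $p=2$ throughout (parity of $m\varphi(p^m)/2$, and the roots of unity $\pm\zeta$) and to check that the constructions realize every admissible dimension.
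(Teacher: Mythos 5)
Your overall framework—circulant eigenvalues, total ramification of $p$, the digit expansion of $T=f(\omega)$ over $\ZZ[\zeta_{p^n}]$, and the mass-$p^n$ bound on each digit—matches the paper's, and your closing norm inequality $p^{m\varphi(p^n)/2}\le |N(T_0)|\le (p^n)^{\varphi(p^n)}$ would be a perfectly good substitute for the paper's direct archimedean comparison. But the necessity argument breaks at exactly the step you call crucial. You write $v_{\mathfrak{p}}(\omega^\rho T_\rho)=\rho+p^{m-n}v_{(1-\zeta)}(T_\rho)$, which requires $v_{\mathfrak{p}}(\omega)=1$; in fact $\omega$ is a root of unity, hence a unit, so $v_{\mathfrak{p}}(\omega^\rho)=0$ and every term $\omega^\rho T_\rho$ has valuation divisible by $p^{m-n}$. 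The valuations are therefore all \emph{congruent} mod $p^{m-n}$ rather than pairwise distinct, cancellation is entirely possible (e.g.\ $1-\omega$ has power-basis digits $1$ and $-1$, both of valuation $0$, yet $v_{\mathfrak{p}}(1-\omega)=1$), and you cannot conclude that the minimum is attained, let alone attained at $\rho=0$ with $v_{(1-\zeta)}(T_0)=m\varphi(p^n)/2$. The distinct-valuations trick genuinely requires the uniformizer basis $\{(1-\omega)^\rho\}$, but rewriting $T$ in that basis replaces the constant digit by $\sum_\rho T_\rho$, a sum of $p^m$ rather than $p^n$ roots of unity, which makes the norm bound vacuous ($m\varphi(p^n)/2\le m\varphi(p^n)$). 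You cannot have distinct valuations and mass-$p^n$ digits simultaneously, so the single-eigenvalue argument does not close.

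The paper escapes this trap by using \emph{two} eigenvalues: since each ideal $(\alpha_k)$ is a power of the unique prime above $p$ and all conjugates of all $\alpha_k$ have absolute value $\sqrt{d}$, the ratio $\alpha_0/\alpha_1$ is an algebraic integer all of whose conjugates have absolute value $1$, hence a root of unity by Kronecker; the resulting relation $\alpha_0=\pm\zeta^r\alpha_1$, read off in the relative basis, identifies $\alpha_0$ with a single mass-$p^n$ digit of $\alpha_1$, and $|\alpha_0|=\sqrt{d}\le p^n$ finishes. Your aside that ``a single lower eigenvalue is provably insufficient'' misses that it is the \emph{ratio} $\alpha_0/\alpha_1$ that does the work. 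On sufficiency, your Chu-type sequence $a_k=p^{n-m}\binom{k}{2}$ does check out for odd $p$ and $m\le n$, but the Frank-type construction for $n<m\le 2n$ and all of $p=2$ are only asserted, whereas the paper exhibits one explicit $\delta$-fibrous function covering every admissible $(m,n,p)$, recovering the $d=q=2^n$ family by lifting a $2^{n-1}$-BH matrix. As written, neither direction of the proposal is complete.
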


\noindent Our analysis of circulant $q$-BH matrices led us to the useful notion of \emph{fibrous functions}.
\begin{defi}
Let $d\geq 0$ and $q = p^n$ be a prime power,
\begin{enumerate}
    \item If $X$ is a finite set, we say a function $g: X \rightarrow \ZZ/(q)$ is \textbf{fibrous} if the cardinality of the fiber $|g^{-1}(b)|$ depends only on $b \bmod p^{n-1}$.
    \item We say a function $f: \ZZ/(d) \rightarrow \ZZ/(q)$ is \textbf{$\delta$-fibrous} if for each $k\not\equiv 0 \bmod d$ the function $\delta_k(x) = f(x + k) - f(x)$ is fibrous.
\end{enumerate}
\end{defi}

\noindent When $q = d = p$ are both prime, $\delta$-fibrous functions coincide with the concept of \emph{planar functions}, which arise in the study of finite projective planes \cite{do} and have applications in cryptography \cite{nkk}. Circulant $q$-BH matrices of dimension $d$ are equivalent to $\delta$-fibrous functions $f: \ZZ/(d) \rightarrow \ZZ/(q)$.

\begin{thm}
\label{equiv_intro}
Let $q = p^n$ be a prime power and $\zeta$ a primitive $q$th root of unity. There is a correspondence between $d\times d$ circulant $q$-Butson Hadamard matrices $H$ and $\delta$-fibrous functions $f:\ZZ/(d)\to\ZZ/(q)$ given by
\[
    \big(H_{ij}\big) = \big(\zeta^{f(i-j)}\big).
\]
\end{thm}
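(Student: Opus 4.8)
The plan is to realize the correspondence directly and then reduce the Hadamard condition $HH^* = dI_d$, one off-diagonal entry at a time, to a statement about vanishing sums of $q$th roots of unity, which I would characterize lattice-theoretically. Since $H$ is circulant its entries have the form $H_{ij} = \zeta^{f(i-j)}$ for a unique function $f\colon \ZZ/(d)\to\ZZ/(q)$, and conversely every such $f$ produces a circulant matrix with $q$th-root-of-unity entries. So the entire content is to show that $HH^* = dI_d$ holds precisely when $f$ is $\delta$-fibrous.

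First I would expand the entries of $HH^*$. Writing $m = i-j$, using $\overline{H_{jk}} = \zeta^{-f(j-k)}$, and reindexing the column sum by $x = j-k$, I expect
\[
    (HH^*)_{ij} = \sum_{x \in \ZZ/(d)} \zeta^{f(x+m) - f(x)} = \sum_{x} \zeta^{\delta_m(x)}.
\]
The diagonal entries ($m\equiv 0$) are automatically equal to $d$, so $HH^* = dI_d$ is equivalent to the requirement that $\sum_x \zeta^{\delta_m(x)} = 0$ for every $m\not\equiv 0\bmod d$. Collecting the terms of this sum according to the fibers of $\delta_m$, it becomes $\sum_{b\in\ZZ/(q)} N_b\,\zeta^b$ where $N_b = |\delta_m^{-1}(b)|$.

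The crux is then a purely arithmetic lemma: for non-negative integers $N_b$ one has $\sum_{b=0}^{q-1} N_b\,\zeta^b = 0$ if and only if $N_b$ depends only on $b\bmod p^{n-1}$. The "if" direction follows from the relation $\sum_{j=0}^{p-1}\zeta^{c+jp^{n-1}} = \zeta^c\,\Phi_{p^n}(\zeta) = 0$ for each residue $c$. For the converse, which I expect to be the main obstacle, I would identify the kernel of the map $\ZZ^q \to \ZZ[\zeta]$, $e_b\mapsto\zeta^b$. The $p^{n-1}$ relation vectors $r_c = \sum_{j=0}^{p-1} e_{c+jp^{n-1}}$ span a sublattice $L$ of this kernel of rank $p^{n-1}$; since the kernel also has rank $q - \phi(p^n) = p^{n-1}$, it suffices to show $L$ is the whole kernel. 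To see this, note that $\ZZ^q/L$ is free of rank $\phi(p^n)$, since each residue block modulo the diagonal vector contributes a copy of $\ZZ^p/\langle(1,\dots,1)\rangle\cong\ZZ^{p-1}$; the induced surjection $\ZZ^q/L \to \ZZ[\zeta]$ is then a surjection of free $\ZZ$-modules of equal rank, hence an isomorphism. Thus $L$ equals the kernel, and membership in $L$ says exactly that $N_b$ is constant on each residue class modulo $p^{n-1}$.

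Finally I would assemble the pieces: by definition $\delta_m$ is fibrous iff $N_b = |\delta_m^{-1}(b)|$ depends only on $b\bmod p^{n-1}$, which by the lemma is equivalent to $(HH^*)_{ij} = 0$ for $m = i-j$. Imposing this for all $m\not\equiv 0\bmod d$ is precisely the condition that $f$ be $\delta$-fibrous, giving the stated equivalence. Everything apart from the saturation argument for $L$ is routine bookkeeping, so I anticipate the lattice computation to be the only genuinely delicate point.
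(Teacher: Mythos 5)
Your proposal is correct, and its overall architecture is the one the paper uses: expand $(HH^*)_{ij}$ as $\sum_x \zeta^{\delta_m(x)}$ with $m = i-j$, observe that the diagonal entries are automatically $d$, and reduce the off-diagonal vanishing to the statement that $\sum_b N_b\zeta^b = 0$ iff $N_b$ depends only on $b \bmod p^{n-1}$ (the paper's Lemma \ref{interpret}). Where you diverge is in the proof of the converse of that lemma. The paper argues with polynomials: $r(x) = \sum_b N_b x^b$ has degree $< p^n$ and vanishes at $\zeta$, hence equals $s(x)\Phi_{p^n}(x)$ with $\deg s < p^{n-1}$, and since $\Phi_{p^n}(x) = \sum_{j<p} x^{jp^{n-1}}$ the coefficients of the product are visibly constant on residue classes mod $p^{n-1}$. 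You instead identify the kernel of $\ZZ^q \to \ZZ[\zeta]$ with the lattice $L$ spanned by the relation vectors $r_c$, using that $\ZZ^q/L$ is free of rank $\phi(p^n)$ and that a surjection of free $\ZZ$-modules of equal rank is an isomorphism; this saturation argument is complete and correct (the $r_c$ have disjoint supports, so $\ZZ^q/L \cong \bigl(\ZZ^p/\langle(1,\dots,1)\rangle\bigr)^{p^{n-1}}$ is indeed free). The two arguments carry the same content --- both ultimately rest on $\Phi_{p^n}$ being the minimal polynomial of $\zeta$ --- but the paper's division argument is slightly more elementary and yields the statement for rational (not merely integral) coefficients for free, while your lattice formulation makes explicit the ``why is $L$ saturated rather than merely of finite index'' issue, which is precisely the point most easily glossed over.
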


\noindent We restate our main result in the language of $\delta$-fibrous functions.
\begin{cor}
If $q = p^n$ is a prime power, then there exist $\delta$-fibrous functions $f: \ZZ/(p^m) \rightarrow \ZZ/(q)$ if and only if $m \leq 2n$ with one exception when $(m,n,p) = (1,1,2)$.
\end{cor}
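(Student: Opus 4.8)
The plan is to deduce the corollary directly from Theorem~\ref{main1} via the correspondence of Theorem~\ref{equiv_intro}, since the statement is nothing more than a translation of the main result into the language of $\delta$-fibrous functions. Fix a primitive $q$th root of unity $\zeta$. Theorem~\ref{equiv_intro} furnishes a bijection between $d\times d$ circulant $q$-Butson Hadamard matrices $H$ and $\delta$-fibrous functions $f:\ZZ/(d)\to\ZZ/(q)$, implemented by $H_{ij}=\zeta^{f(i-j)}$. Because this is a genuine bijection for each fixed $d$, the \emph{existence} of a $\delta$-fibrous function $f:\ZZ/(p^m)\to\ZZ/(q)$ is equivalent, without loss or gain, to the existence of a circulant $q$-Butson Hadamard matrix of dimension $d=p^m$.

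Specializing Theorem~\ref{main1} to $d=p^m$ then settles both directions at once. Such a circulant matrix exists precisely when $m\le 2n$, except at the triple $(m,n,p)=(1,1,2)$; transporting this dimension-by-dimension across the bijection of Theorem~\ref{equiv_intro} reproduces the claimed statement for $\delta$-fibrous functions verbatim. Thus the forward implication (existence forces $m\le 2n$ outside the exception) is inherited from the algebraic number theory constraint behind Theorem~\ref{main1}, while the converse (the bound is achieved) is inherited from the explicit construction underlying that theorem.

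Consequently, the corollary carries no mathematical content beyond the two results it cites, and I expect no genuine obstacle in its proof: the entire weight rests on Theorem~\ref{main1}. The only point worth confirming is that the exceptional triple is preserved under the correspondence rather than introduced or destroyed by it—i.e., that the nonexistence of a $2\times 2$ circulant $2$-Butson Hadamard matrix matches the nonexistence of a $\delta$-fibrous function $f:\ZZ/(2)\to\ZZ/(2)$. Since Theorem~\ref{equiv_intro} is a bijection for every $(d,q)$, this matching is automatic, and the exception transfers with the rest of the statement.
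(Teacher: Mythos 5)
Your proposal is correct and is exactly the paper's route: the corollary is stated there as an immediate consequence of Theorem~\ref{main1} combined with the correspondence of Theorem~\ref{equiv_intro} (Theorem~\ref{equiv} in the body), with the exceptional triple transferring automatically across the bijection. No further comment is needed.
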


\noindent It would be interesting to know if $\delta$-fibrous functions have applications to finite geometry or cryptography.

When $q = 2$, $q$-BH matrices are called \emph{Hadamard matrices}. Hadamard matrices are usually defined as $d\times d$ matrices $H$ with all entries $\pm 1$ such that $HH^t = dI_d$. Buston Hadamard matrices were introduced in \cite{butson} as a generalization of Hadamard matrices. 

Circulant Hadamard matrices arise in the theory of difference sets, combinatorial designs, and synthetic geometry \cite[Chap. 9]{ryser}. There are arithmetic constraints on the possible dimension of a circulant Hadamard matrix. When the dimension $d = 2^m$ is a power of two we have:
\begin{thm}[Turyn \cite{turyn}]
If $d = 2^m$ is the dimension of a circulant Hadamard matrix, then $m = 0$ or $m =2$. 
\end{thm}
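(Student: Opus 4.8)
The quickest route is to recognize Turyn's theorem as the $q=2$ specialization of Theorem~\ref{main1}. A $2$-Butson Hadamard matrix has all entries among the square roots of unity $\pm 1$, and for such a real matrix $HH^*=dI_d$ is the same as $HH^t=dI_d$; thus $2$-BH matrices are exactly the classical Hadamard matrices. Setting $p=2$ and $n=1$, so that $q=p^n=2$ and $d=p^m=2^m$, Theorem~\ref{main1} says a $d\times d$ circulant $2$-BH matrix exists if and only if $m\le 2n=2$, save for the single exception $(m,n,p)=(1,1,2)$, which removes $m=1$. Therefore existence holds precisely for $m\in\{0,2\}$, which is Turyn's statement.

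For a proof independent of Theorem~\ref{main1} I would proceed classically. Encode a circulant Hadamard matrix of order $d=2^m$ by $a(x)=\sum_{i=0}^{d-1}a_i x^i\in\ZZ[x]$ with each $a_i\in\{\pm 1\}$; the identity $HH^t=dI_d$ is equivalent to the vanishing of every nonzero autocorrelation $\sum_i a_i a_{i+k}$, i.e.\ to $a(x)\,a(x^{-1})\equiv d \pmod{x^d-1}$ in the group ring. Evaluating at any $d$th root of unity $\omega$, and using that $a$ has real coefficients so that $a(\omega^{-1})=\overline{a(\omega)}$, gives $|a(\omega)|^2=d$. Taking $\omega=1$ forces $a(1)=\sum_i a_i=\pm\sqrt d$, so $d=2^m$ is a perfect square and $m$ is even.

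The heart of the matter is upgrading ``$m$ even'' to ``$m\le 2$,'' and this is where the arithmetic of $\QQ(\zeta)$ for a primitive $2^m$th root of unity $\zeta$ enters. The prime $2$ is totally ramified, $(2)=(1-\zeta)^{2^{m-1}}$, and complex conjugation fixes $(1-\zeta)$; since $2$ is (vacuously) self-conjugate modulo the $2$-power $2^m$, the identity $a(\zeta)\overline{a(\zeta)}=2^m$ forces $2^{m/2}\mid a(\zeta)$ in $\ZZ[\zeta]$. Writing $a(\zeta)=2^{m/2}\beta$ gives $\beta\overline{\beta}=1$, so every archimedean absolute value of $\beta$ equals $1$; by Kronecker's theorem $\beta$ is a root of unity, whence $a(\zeta)=\pm 2^{m/2}\zeta^{\,j}$ for some $j$. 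Applying the Galois action, the same identity holds at every primitive $2^m$th root of unity, so $a(x)\equiv\pm 2^{m/2}x^{\,j}\pmod{\Phi_{2^m}(x)}$ with $\Phi_{2^m}(x)=x^{2^{m-1}}+1$.

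I expect the main obstacle to be the final descent. The rigid form above, combined with the constraints $|a(\omega)|^2=2^m$ at the lower $2^k$th roots of unity for $k<m$ and the $\pm 1$ integrality of the coefficients, must be pushed to an outright contradiction for $m\ge 4$. Valuation bookkeeping alone does not suffice: the $(1-\zeta)$-adic valuation of $a(\zeta)$ works out to $m2^{m-2}$, exactly matching that of $2^{m/2}$, so no contradiction is visible $2$-adically. The real leverage comes from the archimedean data together with the explicit coefficient structure, which is precisely the delicate content of Turyn's original argument and the reason the general circulant Hadamard conjecture remains open in non-prime-power dimensions.
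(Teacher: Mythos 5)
Your first paragraph is a correct and complete proof, and it is exactly how the paper treats this statement: the paper gives no independent proof of Turyn's theorem (it is cited from \cite{turyn} with a pointer to \cite{stanley}), but Theorem~\ref{main1} is proved via Theorems~\ref{main} and \ref{construction} without any appeal to Turyn, so specializing to $p=2$, $n=1$ is legitimate and non-circular; existence forces $m\le 2$ and the exception $(1,1,2)$ removes $m=1$, leaving $m\in\{0,2\}$. Nothing more is needed.

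Your attempted self-contained argument, by contrast, stops short: you correctly reach $a(\zeta)=\pm 2^{m/2}\zeta^{j}$ for $\zeta$ a primitive $2^m$th root of unity and correctly note that $(1-\zeta)$-adic valuations alone give no contradiction, but you then defer the ``final descent'' to Turyn. The paper's proof of Theorem~\ref{main} shows that this descent is not delicate. Take the two eigenvalues $\alpha_0=a(1)$ and $\alpha_1=a(\zeta)$; total ramification plus $|\alpha_0|=|\alpha_1|=2^{m/2}$ gives $\alpha_0=\pm\zeta^{r}\alpha_1$ exactly as you derived. Now write $\alpha_1=\sum_{j'<2^{m-1}}b_{j'}\zeta^{j'}$ in the power basis of $\QQ(\zeta)$ over $\QQ(\omega)$ with $\omega=\zeta^{2^{m-1}}=-1$: each coefficient $b_{j'}=a_{j'}-a_{j'+2^{m-1}}$ is a sum of only \emph{two} terms of absolute value $1$. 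Comparing coefficients in $\alpha_0=\pm\zeta^{r}\alpha_1$ forces $\alpha_0=\pm\omega^{j_0}b_{j_1}$ for suitable indices, hence $|a(1)|\le 2$; combined with $|a(1)|=2^{m/2}$ this gives $m\le 2$ (and $m$ even, as you showed, plus the trivial check at $d=2$, recovers $m\in\{0,2\}$). So the archimedean leverage you were looking for is simply that a single basis coefficient of $a(\zeta)$ over the index-two subfield is a sum of just two roots of unity.
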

\noindent Turyn's proof uses algebraic number theory, more specifically the fact that 2 is totally ramified in the $2^n$th cyclotomic extension $\QQ(\zeta_{2^n})/\QQ$; an elementary exposition may be found in Stanley \cite{stanley}. Conjecturally this accounts for all circulant Hadamard matrices \cite{ryser}.
\begin{conjecture}
There are no $d$-dimensional circulant Hadamard matrices for $d > 4$. 
\end{conjecture}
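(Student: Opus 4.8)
The statement is the classical circulant Hadamard matrix conjecture, which remains open to this day; what follows is therefore the natural line of attack—continuous with the algebraic number theory invoked for Turyn's theorem above—together with a frank account of exactly where it stalls.

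The plan is to first reformulate the problem in the group ring $\ZZ[\ZZ/(d)]$. A $d \times d$ circulant Hadamard matrix is the same data as a sequence $a_0, \dots, a_{d-1} \in \{\pm 1\}$, encoded by the polynomial $f(x) = \sum_i a_i x^i$, for which the condition $HH^t = dI_d$ becomes $f(x)f(x^{-1}) \equiv d \pmod{x^d - 1}$. Evaluating at $x = 1$ gives $f(1)^2 = d$, so $d$ must be a perfect square; combined with the elementary fact that any Hadamard matrix of order exceeding $2$ has order divisible by $4$, every dimension we must rule out has the shape $d = 4u^2$ with $u \geq 1$, the first nontrivial candidate being $d = 4$. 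For each $d$th root of unity $\zeta \neq 1$ the relation specializes to $f(\zeta)f(\zeta^{-1}) = d$, and since the $a_i$ are real, $f(\zeta^{-1}) = \overline{f(\zeta)}$, so $f(\zeta)$ is an algebraic integer of $\ZZ[\zeta]$ with $|f(\zeta)|^2 = d$; the norm of $f(\zeta)$ down to $\QQ$ thus controls the prime factorization of $d$.

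Next I would run the number-theoretic argument prime by prime, exactly as Turyn does. Fixing a prime $\ell \mid d$, I would compare the factorization of the ideal $(f(\zeta))$ in $\ZZ[\zeta]$ against the known factorization of $\ell$. The decisive input is the self-conjugacy phenomenon: when $\ell$ is self-conjugate modulo the relevant conductor, the relation $f(\zeta)\overline{f(\zeta)} = d$ forces $\ell$ to divide $f(\zeta)$ to an even power, yielding congruence and size constraints on the coefficients $a_i$. For $d = 2^m$ this is decisive, since $2$ is totally ramified in $\QQ(\zeta_{2^m})/\QQ$ and pushing the divisibility of $f(\zeta)$ by $1-\zeta$ to the limit collapses the possibilities to $m \leq 2$; this is precisely the mechanism behind the stated theorem of Turyn.

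The hard part—indeed the reason the statement is a conjecture rather than a theorem—is that this method does not close up for general $d$. Once $d$ has at least two distinct prime factors, the ramification of no single prime is strong enough to be decisive on its own, the self-conjugacy hypotheses needed to force even multiplicities routinely fail, and the only unconditional global constraint, $f(1)^2 = d$, merely confines $d$ to the squares. Bridging this gap would require either a genuinely new global obstruction tying together the local data at all primes dividing $d$ simultaneously, or an essentially combinatorial argument controlling the autocorrelations $\sum_i a_i a_{i+k}$ directly. Absent such an input, infinitely many dimensions $d > 4$ remain beyond reach, and that is where I expect any attempt along these lines to stall.
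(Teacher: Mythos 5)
This statement is Ryser's circulant Hadamard matrix conjecture; the paper records it as a \emph{conjecture} with no proof, since it remains open, so there is no proof of record to compare your attempt against. You correctly recognize this, and your sketch of the standard line of attack --- the group-ring reformulation $f(x)f(x^{-1}) \equiv d \pmod{x^d-1}$, the constraint $f(1)^2 = d$ forcing $d = 4u^2$, and the Turyn-style self-conjugacy/ramification argument that is decisive only for $d = 2^m$ --- is an accurate account of both the known partial results and the precise point at which the method stalls for general $d$. One small correction: a $4\times 4$ circulant Hadamard matrix does exist, so $d = 4$ is not a candidate to be ruled out; the first dimensions in question are $d = 4u^2$ with $u \geq 2$ (and $d = 16$, being a power of $2$, is already excluded by Turyn's theorem, so the genuinely open cases begin with $u$ having an odd prime factor).
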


Circulant $q$-Butson Hadamard matrices provide a natural context within which to consider circulant Hadamard matrices. A better understanding of the former could lead to new insights on the latter. For example, our proof of Theorem \ref{main1} shows that the two possible dimensions for a circulant Hadamard matrix given by Turyn's theorem belong to larger family of circulant $q$-BH matrices with the omission of $d = 2$ being a degenerate exception. Circulant $q$-BH matrices have been studied in some specific dimensions \cite{hgs}, but overall seem poorly understood. We leave the existence of $q$-BH matrices when the dimension $d$ is not a power of $p$ for future work.

\subsection*{Acknowledgements}
The authors thank Padraig Cathain for pointers to the literature, in particular for bringing the work of de Launey \cite{launey} to our attention. We also thank Jeff Lagarias for helpful feedback on an earlier draft.

\section{Main Results}

We always let $q = p^n$ denote a prime power. First recall the definitions of $q$-Butson Hadamard and circulant matrices.

\begin{defi}
A $d$-dimensional \textbf{$q$-Butson Hadamard matrix} $H$ ($q$-BH) is a $d\times d$ matrix all of whose entries are $q$th roots of unity satisfying
\[
    HH^* = d I_d,
\]
where $H^*$ is the conjugate transpose of $H$.\\

\noindent A $d$-dimensional \textbf{circulant matrix} $C$ is a $d\times d$ matrix with coefficients in a ring $R$ such that
\[
    C_{ij} = f(i -j)
\]
for some function $f: \ZZ/(d) \rightarrow R$.
\end{defi}

\begin{example}
Hadamard matrices are the special case of $q$-BH matrices with $q = 2$. The $q$-Fourier matrix $(\zeta^{ij})$ where $\zeta$ is a primitive $q$th root of unity is an example of a $q$-BH matrix (which may also be interpreted as the character table of the cyclic group $\ZZ/(q)$.) When $q = 3$ and $\omega$ is a primitive 3rd root of unity this is the matrix:
\[
    \begin{pmatrix*}[l]
    1 & 1 & 1\\
    1 & \omega & \omega^2\\
    1 & \omega^2 & \omega^4
    \end{pmatrix*}
\]
This example is not a \emph{circulant} $3$-BH matrix. The following is a circulant $3$-BH matrix:
\[
    \begin{pmatrix}
    1 & \omega & \omega \\
    \omega & 1 & \omega \\
    \omega & \omega & 1
    \end{pmatrix}
\]
\end{example}

The remainder of the paper is divided into two sections: first we prove constraints on the dimension $d$ of a circulant $q$-BH matrix when $d$ is a power of $p$; next we introduce the concept of $\delta$-fibrous functions and construct examples of circulant $q$-BH matrices in all possible dimensions. 

\subsection*{Constraints on dimension}

Theorem \ref{main} uses the ramification of the prime $p$ in the $q$th cyclotomic extension $\QQ(\zeta)/\QQ$ to deduce strong constraints on the dimension of a $q$-BH matrix.
\begin{thm}
\label{main}
If $q = p^n$ is a prime power and $H$ is a circulant $q$-Butson Hadamard matrix of dimension $d = p^{m+n}$, then $m \leq n$.
\end{thm}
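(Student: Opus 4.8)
The plan is to diagonalize the circulant matrix and extract a single, well-chosen eigenvalue that lives in the small ring $\ZZ[\zeta]=\ZZ[\zeta_{p^n}]$, and then to pin down its $\mathfrak p$-adic valuation from two sides. First I would record the spectral consequence of the defining identity. Writing the $q$th-root-of-unity entries as $H_{ij}=\zeta^{f(i-j)}$ for a function $f\colon\ZZ/(d)\to\ZZ/(q)$, the eigenvalues of the circulant $H$ are $\lambda_\nu=\sum_{t\in\ZZ/(d)}\zeta^{f(t)}\nu^{t}$ as $\nu$ runs over the $d$th roots of unity. Since $HH^*=dI_d$, the matrix $d^{-1/2}H$ is unitary, so $|\lambda_\nu|^2=d$ for every $\nu$. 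Because $p\mid d$, the element $\omega=\zeta^{p^{n-1}}$ is a primitive $p$th root of unity and in particular a $d$th root of unity, so I single out the eigenvalue
\[
  \mu=\sum_{t\in\ZZ/(d)}\zeta^{f(t)}\omega^{t},
\]
which, crucially, lies in $\ZZ[\zeta]$ rather than in the larger ring $\ZZ[\zeta_d]$, and which satisfies $\mu\bar\mu=|\mu|^2=d=p^{m+n}$.

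Next I would read off the valuation of $\mu$ from the arithmetic of $\QQ(\zeta)/\QQ$. The prime $p$ is totally ramified, $(p)=\mathfrak p^{\varphi(p^n)}$ with $\mathfrak p=(1-\zeta)$, and since $\mathfrak p$ is the unique prime above $p$ it is fixed by complex conjugation, so $v(\mu)=v(\bar\mu)$ for the normalized valuation $v=v_{\mathfrak p}$. The relation $\mu\bar\mu=p^{m+n}$ then gives the exact value
\[
  v(\mu)=\tfrac12(m+n)\,\varphi(p^n).
\]
Already the integrality of $v(\mu)$ imposes a parity constraint that disposes of several boundary cases (for instance it forbids $d=2$ when $(m,n,p)=(0,1,2)$); the substantive content, however, is an \emph{upper} bound $v(\mu)\le n\,\varphi(p^n)$, which combined with the displayed equality yields $\tfrac12(m+n)\le n$, i.e.\ $m\le n$.

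The main obstacle is therefore establishing $v(\mu)\le n\,\varphi(p^n)$, equivalently $\mu\notin\mathfrak p^{\,n\varphi(p^n)+1}=(p^n)\mathfrak p$. Here I would exploit that $\mu$ is a sum of $d$ values that are $p^n$th roots of unity, hence all congruent to $1$ modulo $\mathfrak p$: grouping the terms by $t\bmod p$ writes $\mu=\sum_{r=0}^{p-1}\omega^{r}Q_r$ with $Q_r=\sum_{t\equiv r\,(p)}\zeta^{f(t)}$, and expanding $\omega^{r}=\bigl(1-(1-\omega)\bigr)^{r}$ together with $\zeta^{f(t)}=\bigl(1-(1-\zeta)\bigr)^{f(t)}$ expresses $\mu$ in an integral basis of $\ZZ[\zeta]$ with explicit control on the $p$-adic content of its coefficients. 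The $\delta$-fibrous property supplied by Theorem \ref{equiv_intro} governs exactly how the fiber sizes of the differences $\delta_k$ distribute across residues modulo $p^{n-1}$, and this is the structure that must be leveraged to prevent the content of $\mu$ from becoming divisible by $p^{n+1}$.

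I expect the delicate point to be controlling the cancellation in this expansion precisely at the boundary exponent, so that the conclusion is the sharp $m\le n$ rather than the off-by-one $m\le n+1$ that a crude ``the content is not divisible by too large a power of $p$'' estimate produces. Concretely, I would track $\mu$ modulo the successive powers $\mathfrak p^{j\varphi(p^n)}=(p^{j})$ and use the fact that the coefficients $\zeta^{f(t)}$ are roots of unity rather than arbitrary integers to rule out maximal divisibility at the critical level $j=n$. That the bound is attained at $m=n$ (e.g.\ by the $d=4$ circulant Hadamard matrix) confirms that the estimate must be exactly tight there, which is why this last step cannot be slack.
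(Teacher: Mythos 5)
Your setup is sound as far as it goes: the eigenvalue $\mu=\sum_t\zeta^{f(t)}\omega^t$ attached to the primitive $p$th root of unity $\omega=\zeta^{p^{n-1}}$ does lie in $\ZZ[\zeta_{p^n}]$, satisfies $\mu\bar\mu=d$, and total ramification plus the conjugation-invariance of $\p$ correctly gives $v_{\p}(\mu)=\tfrac12(m+n)\varphi(p^n)$. You have also correctly identified that the whole theorem is equivalent to the upper bound $v_{\p}(\mu)\le n\,\varphi(p^n)$. But you do not prove that bound; you explicitly defer it as ``the main obstacle'' and ``the delicate point,'' and the route you sketch---expanding in powers of $1-\zeta$ and controlling the $p$-adic content of the coefficients---does not obviously close. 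Upper-bounding the valuation of a sum of $p^{m+n}$ roots of unity means ruling out cancellation, and nothing in the expansion you describe prevents all the coefficients of $\mu$ in an integral basis from being divisible by $p^{n+1}$ when $m$ is large; the fiber counts sum to $p^{m+n}$, which leaves ample room for such divisibility. Invoking ``the $\delta$-fibrous property'' here is circular, since that property is just a restatement of $HH^*=dI_d$ and supplies no leverage beyond what you have already used. So the proposal has a genuine gap precisely at the step that carries all the content.

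For comparison, the paper gets its bound by an archimedean argument on a \emph{pair} of eigenvalues, working in the larger field $\QQ(\zeta_d)$ with $\zeta_d$ a primitive $d$th root of unity. Total ramification of $p$ in $\QQ(\zeta_d)$ forces each eigenvalue ideal to be a power of the unique prime above $p$, so one of $\alpha_0/\alpha_1$, $\alpha_1/\alpha_0$ is an algebraic integer of absolute value $1$, hence a root of unity; writing $\alpha_1=\sum_{j'<p^m}b_{j'}\zeta_d^{j'}$ in the linear basis $\{1,\zeta_d,\dots,\zeta_d^{p^m-1}\}$ of $\QQ(\zeta_d)$ over $\QQ(\zeta_q)$ and comparing coefficients in $\alpha_0=\pm\zeta_d^r\alpha_1$ shows that $\alpha_0$ equals, up to a root of unity, a single coefficient $b_{j_1}$, which is a sum of only $p^n$ complex numbers of modulus $1$. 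Hence $p^{(m+n)/2}=|\alpha_0|\le p^n$. The force of the argument comes from this relative-basis coefficient comparison, which converts a $d$-term exponential sum into a $p^n$-term one; your proposal, confined to a single eigenvalue inside $\ZZ[\zeta_{p^n}]$, has no analogue of this step. If you want to salvage your approach, the missing ingredient is some mechanism for comparing two eigenvalues (or an eigenvalue with its expansion over a subfield), not a finer $p$-adic content estimate.
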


\noindent Note that our indexing of $m$ has changed from the introduction; this choice was made to improve notation in our proof. We use this indexing for the rest of the paper.

\begin{proof}[Proof of Theorem \ref{main}]
Suppose $H = (a_{i-j})$ is a circulant $q$-Butson Hadamard matrix of dimension $d= p^{m+n}$. From $H$ being $q$-BH of dimension $d$ we have
\[
    HH^* = dI_{d},
\]
hence $\det(H) = \pm d^{d/2}$ and each eigenvalue $\alpha$ of $H$ has absolute value $|\alpha| = \sqrt{d} = p^{(m+n)/2}$. On the other hand, since $H = (a_{i-j})$ is circulant, it has eigenvalues
\begin{equation}
\label{ev}
    \alpha_k = \sum_{j < d}a_j \zeta^{jk}.
\end{equation}
for $\zeta$ a primitive $d$th root of unity with corresponding eigenvector
\[
    u_k^t = \big(1,\zeta^k, \zeta^{2k},\ldots, \zeta^{(d-1)k}\big).
\]
These observations combine to give two ways of computing $\det(H)$.
\begin{equation}
\label{eq1}
    \prod_k \alpha_k = \det(H) = \pm p^{(m+n)d/2}.
\end{equation}
The identity \eqref{eq1} is the essential interaction between the circulant and $q$-BH conditions on $H$.
The prime $p$ is totally ramified in $\QQ(\zeta)$, hence there is a unique prime ideal $\p\subseteq \ZZ[\zeta]$ over $(p)\subseteq \ZZ$. Since all $\alpha_k \in \ZZ[\zeta]$, it follows from \eqref{eq1} that $(\alpha_k) = \p^{v_k}$ as ideals of $\ZZ[\zeta]$ for some $v_k \geq 0$ and for each $k$. So either $\alpha_0/\alpha_1$ or $\alpha_1/\alpha_0$ is an element of $\ZZ[\zeta]$. Say $\alpha_0/\alpha_1$ is the integral quotient. We noted $|\alpha_k| = p^{(m+n)/2}$ for each $k$, hence $|\alpha_0/\alpha_1| = 1$. The only integral elements of $\ZZ[\zeta]$ with absolute value 1 are roots of unity, hence $\alpha_0/\alpha_1 = \pm\zeta^r$ for some $r\geq 0$, hence
\begin{equation}
\label{eq2}
    \alpha_0 = \pm \zeta^r \alpha_1.
\end{equation}
\noindent By \eqref{ev} we have
\[
    \alpha_0 = \sum_{j < d} a_j \hspace{3em}
    \alpha_1 = \sum_{j < d} a_j\zeta^j.
\]
Each $j < d = p^{m+n}$ has a unique expression as $j = j_0 + j_1p^m$ where $j_0 < p^m$ and $j_1 < p^n$. Let $\omega = \zeta^{p^m}$ be a primitive $q$th root of unity. Then
\[
    \zeta^j = \zeta^{j_0 + j_1p^m} = \omega^{j_1}\zeta^{j_0}.
\]
Writing $\alpha_1$ in the linear basis $\big\{1, \zeta,\zeta^2,\ldots,\zeta^{p^m-1}\big\}$ of $\QQ(\zeta)/\QQ(\omega)$ we have
\[
    \alpha_1 = \sum_{j < d} a_j \zeta^j = \sum_{j' < p^m} b_{j'}\zeta^{j'},
\]
where $b_{j'}$ is a sum of $p^n$ complex numbers each with absolute value 1. Now \eqref{eq2} says $\alpha_0 = \pm \omega^{j_0}\zeta^{-j_1} \alpha_1$ for some $j_0$ and $j_1$, thus
\[
    \sum_{j < d}a_j = \alpha_0 = \pm\omega^{j_0}\zeta^{-j_1}\alpha_1 = \sum_{j' < p^m} \pm\omega^{j_0}b_{j'} \zeta^{j' - j_1}.
\]
Comparing coefficients we conclude that
\[
    \alpha_0 = \pm\omega^{j_0}b_{j_1},
\]
which is to say that $\alpha_0$ is the sum of $p^n$ complex numbers each with absolute value 1, hence $|\alpha_0| \leq p^n$. On the other hand we have $|\alpha_0| = p^{(m+n)/2}$. Thus $m + n \leq 2n \Longrightarrow 0 \leq m \leq n$ as desired.
\end{proof}

\begin{remark}
The main impediment to extending this result from $q = p^n$ to a general integer $q$ is that we no longer have the total ramification of the primes dividing the determinant of $H$. It may be possible to get some constraint in certain cases from a closer analysis of the eigenvalues $\alpha_k$ and ramification, but we do not pursue this.
\end{remark}

\subsection*{$\delta$-Fibrous functions and construction of circulant $q$-BH matrices}

Recall the notion of \emph{fibrous functions} from the introduction:
\begin{defi}
Let $d\geq 0$ and $q = p^n$ be a prime power,
\begin{enumerate}
    \item If $X$ is a finite set, we say $g: X \rightarrow \ZZ/(q)$ is \textbf{fibrous} if the cardinality of the fibers $|g^{-1}(b)|$ depends only on $b \bmod p^{n-1}$.
    \item We say a function $f: \ZZ/(d) \rightarrow \ZZ/(q)$ is \textbf{$\delta$-fibrous} if for each $k\not\equiv 0 \bmod d$ the function $x \mapsto f(x + k) - f(x)$ is fibrous.
\end{enumerate}
\end{defi}

\noindent Lemma \ref{interpret} is a combinatorial reinterpretation of the cyclotomic polynomials $\Phi_{p^n}(x)$. 

\begin{lemma}
\label{interpret}
Let $p$ be a prime and $\zeta$ a primitive $p^n$th root of unity.
\begin{enumerate}
    \item If $\sum_{k < p^n} b_k \zeta^k = 0$ with $b_k \in \QQ$, then $b_k$ depends only on $k \bmod p^{n-1}$.
    \item If $X$ is a finite set and $g: X \rightarrow \ZZ/(p^n)$ is a function, then $g$ is fibrous iff
    \[
        \sum_{x \in X} \zeta^{g(x)} = 0.
    \]
\end{enumerate} 
\end{lemma}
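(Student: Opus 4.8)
The plan is to deduce everything from the explicit shape of the cyclotomic polynomial, namely
\[
    \Phi_{p^n}(x) = \Phi_p\big(x^{p^{n-1}}\big) = 1 + x^{p^{n-1}} + x^{2p^{n-1}} + \cdots + x^{(p-1)p^{n-1}},
\]
equivalently the single relation $\sum_{s < p}\eta^s = 0$ where $\eta = \zeta^{p^{n-1}}$ is a primitive $p$th root of unity. The organizing device is the ``base $p^{n-1}$'' expansion $k = r + s\,p^{n-1}$ with $0 \leq r < p^{n-1}$ and $0 \leq s < p$, under which $\zeta^k = \zeta^r\eta^s$. This is exactly the factorization already exploited in the proof of Theorem \ref{main}, and it lets me write the relation in part (1) as
\[
    0 = \sum_{k < p^n} b_k\zeta^k = \sum_{r < p^{n-1}} \zeta^r\,c_r, \qquad c_r := \sum_{s < p} b_{r + sp^{n-1}}\,\eta^s \in \QQ(\eta).
\]

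To finish part (1) I would peel off the two layers of the tower $\QQ \subseteq \QQ(\eta) \subseteq \QQ(\zeta)$ one at a time. First, $[\QQ(\zeta):\QQ(\eta)] = p^{n-1}$ with $\QQ(\eta)$-basis $\{1,\zeta,\ldots,\zeta^{p^{n-1}-1}\}$ (the same basis fact used in Theorem \ref{main}), so the displayed relation forces $c_r = 0$ for every $r$. Second, $c_r = 0$ is a $\QQ$-linear dependence among $1,\eta,\ldots,\eta^{p-1}$, and since the minimal polynomial of $\eta$ is $\Phi_p(x) = 1 + x + \cdots + x^{p-1}$, every such dependence is a scalar multiple of the cyclotomic relation. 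Hence $b_{r + sp^{n-1}}$ is independent of $s$; that is, $b_k$ depends only on $k \bmod p^{n-1}$, as claimed.

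Part (2) then follows quickly by interpreting the sum combinatorially. Setting $N_b := |g^{-1}(b)|$ gives $\sum_{x \in X}\zeta^{g(x)} = \sum_{b < p^n} N_b\,\zeta^b$. The implication ``sum $=0 \Rightarrow$ fibrous'' is immediate from part (1) applied to the rational (indeed nonnegative integer) coefficients $N_b$. For the converse, if $g$ is fibrous I group the terms into residue classes mod $p^{n-1}$ and factor out the cyclotomic relation:
\[
    \sum_{b < p^n} N_b\,\zeta^b = \sum_{r < p^{n-1}} N_r\,\zeta^r\sum_{s < p}\eta^s = 0,
\]
writing $N_r$ for the common fiber size on the class of $r$. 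The only real obstacle is the first step of part (1): one must be careful that the two linear-independence inputs (the basis $\{\zeta^r\}_{r<p^{n-1}}$ over $\QQ(\eta)$, and the fact that a single cyclotomic relation generates all dependences among $1,\eta,\ldots,\eta^{p-1}$) assemble correctly, rather than just invoking $\dim_\QQ\QQ(\zeta) = p^{n-1}(p-1)$ abstractly; handling the two layers separately, as above, is what makes the coefficient comparison clean.
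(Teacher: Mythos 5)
Your proof is correct, but it takes a genuinely different route from the paper's in part (1). The paper argues by polynomial divisibility: since $r(x)=\sum_{k<p^n}b_kx^k$ has degree $<p^n$ and vanishes at $\zeta$, it must equal $s(x)\Phi_{p^n}(x)$ with $\deg s<p^{n-1}$, and expanding the product against $\Phi_{p^n}(x)=\sum_{j<p}x^{jp^{n-1}}$ and comparing coefficients gives $b_{i+jp^{n-1}}=a_i$ in one step. You instead peel apart the tower $\QQ\subseteq\QQ(\eta)\subseteq\QQ(\zeta)$: linear independence of $\{1,\zeta,\ldots,\zeta^{p^{n-1}-1}\}$ over $\QQ(\eta)$ kills each $c_r$, and then the one-dimensionality of the relation space of $1,\eta,\ldots,\eta^{p-1}$ over $\QQ$ forces the coefficients in each class to be constant. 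Both arguments rest on the same ultimate input (irreducibility of cyclotomic polynomials, hence knowledge of the full relation space among the $\zeta^k$), and your second step is essentially the $n=1$ instance of the paper's divisibility argument, so you have not avoided that mechanism--only localized it. What your version buys is uniformity with the proof of Theorem \ref{main}, which uses exactly the same base-$p^{n-1}$ exponent splitting and relative basis; what the paper's version buys is brevity, a single divisibility step in $\QQ[x]$ that makes the ``combinatorial reinterpretation of $\Phi_{p^n}$'' transparent. Your part (2) is the same as the paper's in both directions.
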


\begin{proof}
(1) Suppose $\sum_{k < p^n} b_k \zeta^k = 0$ for some $b_k\in \QQ$. Then $r(x) = \sum_{k < p^n} b_k x^k \in \QQ[x]$ is a polynomial with degree $< p^n$ such that $r(\zeta) = 0$. So there is some $s(x) \in \QQ[x]$ such that $r(x) = s(x) \Phi_{p^n}(x)$ where
\[
    \Phi_{p^n}(x) = \sum_{j < p} x^{jp^{n-1}}
\]
is the $p^n$th cyclotomic polynomial---the minimal polynomial of $\zeta$ over $\QQ$. Since $\deg \Phi_{p^n}(x) = p^n - p^{n-1}$, it follows that $\deg s(x) < p^{n-1}$. Let
\[
    s(x) = \sum_{i < p^{n-1}} a_i x^i
\]
for some $a_i \in\QQ$. Expanding $s(x)\Phi_{p^n}(x)$ we have
\[
    r(x) = s(x)\Phi_{p^n}(x) = \sum_{\subalign{i &< p^{n-1}\\ j &< p}} a_i x^{i + jp^{n-1}}.
\]
Comparing coefficients yields
\[
    b_k = b_{i + jp^{n-1}} = a_i,
\]
which is to say, $b_k$ depends only $i \equiv k \bmod p^{n-1}$.\\

\noindent (2) Suppose $g$ is fibrous. For each $i < p^{n-1}$, let $a_i = |g^{-1}(i)|$. Then
\[
    \sum_{x\in X} \zeta^{g(x)} =  \sum_{i < p^{n-1}}\sum_{j < p}a_i\zeta^{i + jp^{n-1}} = \Phi_{p^n}(\zeta)\sum_{i < p^{n-1}} a_i \zeta^i= 0.
\]
Conversely, for each $k < p^n$ let $c_k = |g^{-1}(k)|$. Then
\[
    0 = \sum_{x\in X} \zeta^{g(x)} = \sum_{k < p^n} c_k \zeta^k,
\]
and (1) implies $c_k$ depends only on $k \bmod p^{n-1}$. Hence $g$ is fibrous.
\end{proof}

Theorem \ref{equiv} establishes the equivalence between $q$-BH matrices of dimension $d$ and $\delta$-fibrous functions $f: \ZZ/(d) \rightarrow \ZZ/(q)$.

\begin{thm}
\label{equiv}
Let $q = p^n$ be a prime power. There is a correspondence between circulant $q$-Butson Hadamard matrices $H$ of dimension $d$ and $\delta$-fibrous functions $f:\ZZ/(d)\to\ZZ/(q)$ given by
\[
    \big(H_{i,j}\big) = \big(\zeta^{f(i-j)}\big).
\]
\end{thm}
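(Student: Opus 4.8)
The plan is to reduce the matrix identity $HH^* = dI_d$ to a family of vanishing exponential sums, one for each nonzero shift, and then recognize each such sum as exactly the fibrous condition supplied by Lemma \ref{interpret}(2). First I would note that writing $H_{ij} = \zeta^{f(i-j)}$ sets up a bijection between $d\times d$ circulant matrices whose entries are $q$th roots of unity and functions $f\colon \ZZ/(d)\to\ZZ/(q)$: every $q$th root of unity is uniquely $\zeta^b$ for some $b\in\ZZ/(q)$, and a circulant matrix is determined by its symbol. So the content of the theorem is that, under this bijection, the $q$-BH condition corresponds precisely to $f$ being $\delta$-fibrous.

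Next I would compute the entries of $HH^*$ directly. Using $\overline{\zeta^a} = \zeta^{-a}$ and $(H^*)_{jk} = \overline{H_{kj}}$, one gets
\[
    (HH^*)_{ik} = \sum_{j < d} H_{ij}\overline{H_{kj}} = \sum_{j < d} \zeta^{f(i-j) - f(k-j)}.
\]
The diagonal terms $i = k$ give $\sum_{j<d}\zeta^0 = d$ automatically, so $HH^* = dI_d$ is equivalent to the vanishing of every off-diagonal entry. For a fixed pair $i\neq k$, set $\ell = i - k \not\equiv 0 \bmod d$ and reindex the sum by $x = k - j$; since then $i - j = \ell + x$ and $k - j = x$, the off-diagonal entry becomes
\[
    (HH^*)_{ik} = \sum_{x < d} \zeta^{f(x+\ell) - f(x)} = \sum_{x < d} \zeta^{\delta_\ell(x)},
\]
where $\delta_\ell(x) = f(x+\ell) - f(x)$ is exactly the shift function appearing in the definition of $\delta$-fibrous.

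Finally I would invoke Lemma \ref{interpret}(2) with $X = \ZZ/(d)$ and $g = \delta_\ell$: the sum $\sum_{x<d}\zeta^{\delta_\ell(x)}$ vanishes if and only if $\delta_\ell$ is fibrous. Ranging over all $\ell\not\equiv 0 \bmod d$ shows that all off-diagonal entries of $HH^*$ vanish precisely when $\delta_\ell$ is fibrous for every nonzero $\ell$, which is exactly the statement that $f$ is $\delta$-fibrous. Combined with the automatic diagonal identity, this yields the equivalence $HH^* = dI_d \iff f$ is $\delta$-fibrous and hence the claimed correspondence.

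I do not expect a genuine obstacle here: the argument is a bookkeeping reduction followed by a single citation. The one point that needs a little care is the reindexing step, where one must verify that after the substitution $x = k - j$ the difference $f(i-j) - f(k-j)$ depends on $i$ and $k$ only through the shift $\ell = i - k$, so that each off-diagonal constraint is governed by a single well-defined function $\delta_\ell\colon \ZZ/(d)\to\ZZ/(q)$. It is precisely this collapse to a per-shift condition that makes Lemma \ref{interpret} directly applicable.
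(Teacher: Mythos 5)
Your proof is correct and follows essentially the same route as the paper's: both reduce $HH^* = dI_d$ to the vanishing of the off-diagonal inner products, identify each such sum as $\sum_x \zeta^{\delta_\ell(x)}$ for a nonzero shift $\ell$, and apply Lemma \ref{interpret}(2) in both directions to get the equivalence with $\delta$-fibrousness. The reindexing step you flag is handled the same way in the paper (there phrased as the inner product of columns $j$ and $j+k$ depending only on $k$), so there is nothing further to add.
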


\begin{proof}
Suppose $f$ is $\delta$-fibrous. Define the matrix $H = (H_{ij})$ by $H_{ij}=\zeta^{f(i-j)}$ where $\zeta$ is a primitive $q$th root of unity. $H$ is plainly circulant and has all entries $q$th roots of unity. It remains to show that $HH^* = dI_d$, which is to say that the inner product $r_{j,j+k}$ of column $j$ and column $j+k$ is 0 for each $j$ and each $k\not\equiv 0 \bmod d$. For each $k \not\equiv 0 \bmod d$ the function $\delta_k(x) = f(x + k) - f(x)$ is fibrous. Then we compute
\[
    r_{j,j+k} = \sum_{i < d}\zeta^{f(i-j + k) - f(i-j)} = \sum_{i < d}\zeta^{\delta_k(i-j)} = 0,
\]
where the last equality follows from Lemma \ref{interpret} (2).

Conversely, suppose $H = (H_{i,j})$ is a circulant $q$-Butson Hadammard matrix. Then $H_{i,j} = \zeta^{f(i-j)}$ for some function $f: \ZZ/(d)\rightarrow \ZZ/(q)$. Since $HH^* = dI_d$ we have for each $k \not\equiv 0 \bmod d$,

\[
    0 = r_{j,j+k} = \sum_{i<d} \zeta^{f(i-j + k)-f(i -j)}.
\]
Lemma \ref{interpret} (2) then implies $\delta_k(x) = f(x + k) - f(x)$ is fibrous. Therefore $f$ is $\delta$-fibrous.
\end{proof}

Lemma \ref{affine fibrous} checks that affine functions are fibrous. We use this in our proof of Theorem \ref{construction}.

\begin{lemma}
\label{affine fibrous}
If $q = p^n$ is a prime power, then for all $a\not\equiv 0\bmod q$ and arbitrary $b$, the function $f(x) = ax + b$ is fibrous.
\end{lemma}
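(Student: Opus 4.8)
The plan is to compute the fiber sizes $|f^{-1}(c)|$ directly and verify that they depend only on $c \bmod p^{n-1}$. I take $f\colon \ZZ/(q) \to \ZZ/(q)$; the general domain $\ZZ/(d)$ with $q \mid d$ reduces to this case, since each residue class mod $q$ is covered exactly $d/q$ times and so every fiber is scaled by the same factor $d/q$, which preserves fibrousness. A point $x$ lies in $f^{-1}(c)$ precisely when $ax \equiv c - b \bmod p^n$, so the problem reduces to counting the solutions of a single linear congruence.

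First I would write $a = p^s u$ with $u$ a unit in $\ZZ/(q)$ and $s = v_p(a)$. The hypothesis $a \not\equiv 0 \bmod q$ is exactly the assertion that $s \le n-1$, and this is the crux of the lemma. Standard congruence theory then gives that $ax \equiv c - b \bmod p^n$ has exactly $p^s$ solutions when $p^s \mid (c-b)$ and none otherwise, so
\[
    |f^{-1}(c)| = \begin{cases} p^s & \text{if } c \equiv b \bmod p^s, \\ 0 & \text{otherwise.} \end{cases}
\]

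The last step is the observation that the condition $c \equiv b \bmod p^s$ depends only on $c \bmod p^s$, and since $s \le n-1$ this is determined by $c \bmod p^{n-1}$. Hence the fiber size depends only on $c \bmod p^{n-1}$ and $f$ is fibrous. I do not expect a serious obstacle here; the only point requiring care is the bookkeeping $a \not\equiv 0 \bmod q \Rightarrow s \le n-1$, which is precisely what prevents the divisibility test from needing $c$ modulo $p^n$. Indeed, were $a \equiv 0 \bmod q$ the function would be constant, and its fiber profile would then distinguish residues modulo $p^n$, violating fibrousness, so the hypothesis is genuinely used.

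Alternatively, and perhaps more cleanly, I could invoke Lemma \ref{interpret}(2): it suffices to show $\sum_{x \in \ZZ/(q)} \zeta^{f(x)} = 0$ for $\zeta$ a primitive $q$th root of unity. Factoring out $\zeta^b$ leaves $\sum_{x \in \ZZ/(q)} (\zeta^a)^x$, a geometric sum of a nontrivial $q$th root of unity over a complete residue system, which vanishes because $\zeta^a \neq 1$ (again exactly the hypothesis $a \not\equiv 0 \bmod q$). This repackages the same input into a one-line computation.
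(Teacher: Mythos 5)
Your proposal is correct, and it in fact contains two proofs. The ``alternative'' you sketch at the end is precisely the paper's proof: factor out $\zeta^b$, observe that $\zeta^a$ is a nontrivial $q$th root of unity because $a\not\equiv 0\bmod q$, sum the geometric series to get $\sum_{j<q}\zeta^{f(j)}=0$, and invoke Lemma \ref{interpret}(2). Your primary argument --- writing $a=p^su$ with $u$ a unit and $s=v_p(a)\leq n-1$, counting solutions of $ax\equiv c-b\bmod p^n$ to get $|f^{-1}(c)|=p^s$ when $p^s\mid(c-b)$ and $0$ otherwise, and noting that this test is determined by $c\bmod p^{n-1}$ --- is a genuinely different and equally valid route that works directly from the definition of fibrous rather than through the character-sum criterion. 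It buys you more: it exhibits the exact fiber profile of $f$ and makes transparent why the hypothesis $a\not\equiv 0\bmod q$ (equivalently $s\leq n-1$) is exactly what is needed, as your closing remark about the constant function nicely illustrates. The cost is a little more bookkeeping; the paper's one-line computation is shorter and hands the result to Lemma \ref{interpret}(2), which is the form in which fibrousness gets consumed in the construction of Theorem \ref{construction} anyway. Your reduction of a general domain $\ZZ/(d)$ with $q\mid d$ to the case $d=q$ by uniform scaling of fibers is also sound, and is a point the paper leaves implicit.
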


\begin{proof}
Since $a \not\equiv 0 \bmod q$,
\[
    \sum_{j < q} \zeta^{f(j)} = \sum_{j < q} \zeta^{aj + b}= \zeta^b\sum_{j < q} (\zeta^a)^j= 0.
\]
Thus, by Lemma \ref{interpret} (2) we conclude that $f$ is fibrous.
\end{proof}

\begin{thm}
\label{construction}
If $q = p^n$ is a prime power, then there exists a circulant $q$-Butson Hadamard matrix of dimension $d = p^{m+n} = p^mq$ for each $m \leq n$ unless $(m,n,p) = (0,1,2)$.
\end{thm}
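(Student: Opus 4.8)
The plan is to use Theorem \ref{equiv} to replace the matrix problem with the construction of a $\delta$-fibrous function $f\colon \ZZ/(p^{m+n})\to\ZZ/(p^n)$ for every $m\le n$ (excluding $(m,n,p)=(0,1,2)$), and to verify the $\delta$-fibrous condition through Lemma \ref{interpret}(2): it suffices to exhibit $f$ with
\[
    \sum_{x\in\ZZ/(p^{m+n})}\zeta^{f(x+k)-f(x)}=0\qquad\text{for all }k\not\equiv 0\bmod p^{m+n},
\]
where $\zeta$ is a primitive $p^n$th root of unity. Guided by the fact that $\delta$-fibrous functions generalize planar functions, which are built from $x\mapsto x^2$, I would look for $f$ among \emph{generalized chirp} (quadratic-type) functions adapted to the splitting $x=x_0+p^mx_1$ with $x_0<p^m$ and $x_1<p^n$ that already appears in the proof of Theorem \ref{main}.

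First I would dispose of the differences indexed by $k$ of low $p$-adic valuation. For a genuinely quadratic $f$ the difference $\delta_k(x)=f(x+k)-f(x)$ is affine in $x$, say $\delta_k(x)=\alpha_k x+\beta_k$, and the evident extension of Lemma \ref{affine fibrous} to the larger domain $\ZZ/(p^{m+n})$ shows that such an affine function is fibrous exactly when the $p$-adic valuation $v_p(\alpha_k)$ is $<n$: in that case $\sum_x\zeta^{\alpha_k x+\beta_k}$ is a geometric sum whose ratio $\zeta^{\alpha_k}$ is a nontrivial root of unity, of order a power of $p$ at most $p^n$, which divides the length $p^{m+n}$, so the sum vanishes. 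For the quadratic part the leading coefficient $\alpha_k$ is essentially $k$ (up to the precise form of the quadratic), so this handles every $k$ with $v_p(k)<n$ at one stroke.

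The crux—and the step I expect to be the main obstacle—is the remaining set of $k$ with $p^n\mid k$, i.e. the nonzero elements of the subgroup $p^n\ZZ/(p^{m+n})\cong\ZZ/(p^m)$; these exist precisely when $m\ge 1$ and are exactly the indices on which a single quadratic degenerates, making $\delta_k$ constant and hence non-fibrous. This is the same obstruction responsible for the upper bound $m\le n$ in Theorem \ref{main}, where differencing through this subgroup forced $|\alpha_0|\le p^n$. To defeat it the construction must couple $f$ to the top $m$ digits of $x$ through a second quadratic term at the scale $p^m$, chosen so that for $k=p^nk'$ the character sum factors, with one factor a nontrivial geometric sum that still vanishes because $\zeta^{k'}$ has order $>1$; the inequality $m\le n$ is exactly what leaves enough room in the $p^n$th roots of unity for this coupling to be well defined modulo $p^n$. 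The bookkeeping here is delicate: the splitting $x=x_0+p^mx_1$ introduces carries into $\delta_k$ that must be controlled, and for $p=2$ the factor of $2$ produced when differencing a square lowers $v_p(\alpha_k)$ by one, which is what ultimately obstructs the single case $(m,n,p)=(0,1,2)$ and forces a separate treatment of $p=2$. I would therefore organize the proof as a case analysis on $v_p(k)$ and on the parity of $p$, checking the small cases (in particular $m=0$) directly and confirming that $(0,1,2)$ is the only genuine failure.
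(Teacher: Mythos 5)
Your reduction and road map match the paper's strategy: pass to a $\delta$-fibrous function $f\colon\ZZ/(p^{m+n})\to\ZZ/(q)$ via Theorem \ref{equiv}, verify fibrousness through the vanishing character sum of Lemma \ref{interpret}(2), split the differences $\delta_k$ according to whether $p^n\mid k$, handle the generic $k$ by an affine/geometric-sum argument (Lemma \ref{affine fibrous}), and isolate the subgroup $p^n\ZZ/(p^{m+n})$ as the hard case that a single quadratic cannot handle. The paper's actual $f$ is indeed of the quadratic type you predict: writing $k=k_0+p^mk_1$, it is $f(k)=\sum_{j<k}\lfloor j/p^m\rfloor = p^m\binom{k_1}{2}+k_0k_1$, and the cross term $k_0k_1$ is exactly the ``coupling'' you call for.

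The gap is that you never produce this function or any other: the entire content of the theorem is the explicit construction, and your proposal describes the shape of a solution without exhibiting one or verifying it. In particular, the crux you correctly identify --- showing that for $k=qk'$ with $0<k'<p^m$ the difference $\delta_k$ is again affine with leading coefficient $\ell=k'p^{n-m}$ of $p$-adic valuation $<n$ (the one and only place the hypothesis $m\le n$ is used) --- is asserted to be achievable but not carried out, and it is precisely the ``delicate bookkeeping'' you defer. You also misplace where the exceptional cases live: in the paper the obstruction is not in the fibrousness of the differences but in the well-definedness of $f$ modulo $p^{m+n}$, which requires $p^m\binom{q}{2}\equiv 0\bmod q$; this fails for the entire family $(m,n,p)=(0,n,2)$, not just $(0,1,2)$, and the cases $n>1$ of that family must be recovered separately by lifting the $(1,n-1,2)$ example via Lemma \ref{lift}. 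A complete proof must either supply the explicit $f$ and these verifications or find a genuinely different construction; as written the proposal is a correct plan rather than a proof.
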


\noindent Our construction in the proof of Theorem \ref{construction} misses the family $(m,n,p) = (0,n,2)$ for each $n\geq 1$. Lemma \ref{lift} records a quick observation that circumvents this issue for $n>1$, as our construction does give circulant $2^{n-1}$-BH matrices of dimension $2^n$.
\begin{lemma}
\label{lift}
If $q = p^n$ is a prime power and there exists a circulant $q$-BH matrix of dimension $d$, then there exists a circulant $p^kq$-BH matrix of dimension $d$ for all $k\geq 0$.
\end{lemma}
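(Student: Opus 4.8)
The plan is to observe that the lemma is essentially a change-of-target-group statement requiring no real modification of the matrix: a single root-of-unity containment does all the work. First I would recall that by definition the entries of a circulant $q$-BH matrix $H$ are $q$th roots of unity. Since $q = p^n$ divides $p^k q = p^{n+k}$, any $\omega$ with $\omega^q = 1$ also satisfies $\omega^{p^k q} = (\omega^q)^{p^k} = 1$, so every $q$th root of unity is automatically a $p^k q$th root of unity.

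From this I would conclude that the very same matrix $H$ has all entries $p^k q$th roots of unity. The Butson condition $HH^* = dI_d$ is a statement about the complex matrix $H$ and makes no reference to the order of the roots of unity involved, so it continues to hold verbatim. Likewise circulance, being the condition $H_{ij} = f(i-j)$, is unaffected. Hence $H$ is a circulant $p^k q$-BH matrix of dimension $d$, which is exactly what is required.

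I could alternatively phrase the argument through Theorem \ref{equiv} by transporting the associated $\delta$-fibrous function $f \colon \ZZ/(d) \to \ZZ/(q)$ along the inclusion $\ZZ/(p^n) \hookrightarrow \ZZ/(p^{n+k})$ sending $x \mapsto p^k x$, and checking that the composite remains $\delta$-fibrous. The one point needing care there is that the notion of fibrous is defined relative to the modulus of the target group (fibers depending on $b \bmod p^{n-1}$ versus $b \bmod p^{n+k-1}$), so one would have to verify that the fiber-size condition survives the change of modulus. Precisely because the matrix formulation sidesteps this bookkeeping, I expect the direct argument above to be cleanest, and I anticipate no substantial obstacle.
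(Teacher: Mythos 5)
Your proposal is correct and matches the paper's proof: the paper likewise observes that every $q$th root of unity is a $p^kq$th root of unity and reinterprets the same matrix $H$ as a circulant $p^kq$-BH matrix. The additional discussion of the $\delta$-fibrous reformulation is unnecessary but does not detract from the argument.
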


\begin{proof}
Every $q$th root of unity is also a $p^kq$th root of unity, hence we may view a circulant $q$-BH matrix $H$ of dimension $d$ as a circulant $p^kq$-BH for all $k\geq 0$.
\end{proof}

\begin{proof}[Proof of Theorem \ref{construction}]
Our strategy is to first construct a sequence of functions 
\[
    \delta_k : \ZZ/(p^m)\times \ZZ/(q) \rightarrow \ZZ/(q)
\]
which are fibrous for each $k < p^mq$. If $i: \ZZ/(p^m)\times \ZZ/(q) \rightarrow \ZZ/(p^mq)$ is the bijection $i(x,y) = x + p^my$, we define $f: \ZZ/(p^mq) \rightarrow \ZZ/(q)$ such that $f(z+k) - f(z) = \delta_k(x,y)$ when $z = i(x,y)$. Hence $f$ is $\delta$-fibrous and Theorem \ref{equiv} implies the existence of a corresponding circulant $q$-BH matrix of dimension $p^mq$.

Now for each $k\geq 0$ define $\delta_k$ by
\begin{equation}
\label{d_k}
    \delta_k(x,y) = ky + \sum_{j < k} S_j(x),
\end{equation} 
where
\[
    S_j(x) = \sum_{i < j} \chi(x+i),\hspace{2em} \chi(x) = \begin{cases} 1 & x \equiv -1\bmod p^m,\\ 0 & \text{otherwise.}\end{cases}
\]
Observe that $S_j(x)$ counts the integers in the interval $[x, x+j)$ congruent to $-1 \bmod p^m$ (which only depends on $x \bmod p^m$.) Any interval of length $p^mj_1$ contains precisely $j_1$ integers congruent to $-1 \bmod p^m$. For each $j < p^mq$, write $j = j_0 + p^mj_1$ with $j_0 < p^m$ and $j_1 < q$, then
\begin{equation}
\label{id1}
    S_j(x) = \sum_{i < j_0 + p^mj_1} \chi(x + i) = \sum_{i < j_0}\chi(x + i) + j_1 = S_{j_0}(x) + j_1.
\end{equation}
We show that $\delta_k$ is fibrous when $k < p^mq$. If $k \not\equiv 0 \bmod q$, then for each $x = x_0$ the function $\delta_k(x_0,y)$ is affine hence fibrous by Lemma \ref{affine fibrous}. So $\delta_k(x,y)$ is fibrous. Now suppose $k = k'q$ for some $k' < p^m$. Using \eqref{id1} we reduce \eqref{d_k} to
\begin{equation}
\label{id2}
    \delta_k(x,y) = \sum_{j < k'q} S_j(x) = \sum_{j_0 + p^mj_1 <k'q} S_{j_0}(x) + j_1 = \ell \sum_{j_0 < p^m}S_{j_0}(x) + p^m\tbinom{\ell}{2},
\end{equation}
where $k'q = p^m(k'p^{n-m}) = p^m\ell$. Here we use our assumption $m\leq n$.
The definition of $\chi$ implies
\[
    S_{j_0}(x) = \begin{cases} 0 & j_0 < p^m - x\\ 1 & j_0 \geq p^m - x, \end{cases} \hspace{1em}\Longrightarrow\hspace{1em} \sum_{j_0 < p^m} S_{j_0}(x) = x,
\]
whence $\delta_k(x,y) = \ell x + p^m \tbinom{\ell}{2}$. Since $k' < p^m$ it follows that $\ell = k'p^{n-m} < p^n = q$, so $\delta_k(x,y)$ is affine hence fibrous by Lemma \ref{affine fibrous}.

Define $f:\ZZ/(p^mq) \rightarrow \ZZ/(q)$ by $f(k) = \delta_k(0,0)$. For this to be well-defined, it suffices to check that $\delta_{k+p^mq}(x,y) = \delta_k(x,y)$ with arbitrary $k$. By \eqref{d_k},
\begin{align*}
    \delta_{k + p^mq}(x,y) &= (k + p^mq)y + \sum_{j < k + p^mq} S_j(x)\\
    &= ky + \sum_{j < k}S_j(x) + \sum_{j < p^mq}S_{j +k}(x)\\
    &= \delta_k(x,y) + \sum_{j < p^mq}S_{j+k}(x).
\end{align*}
The argument leading to \eqref{id2} gives
\[
    \sum_{j < p^mq}S_{j+k}(x) = qx + p^m\tbinom{q}{2} = p^m\tbinom{q}{2}.
\]
Finally, $p^m\binom{q}{2} \equiv 0 \bmod q$ unless $(m,n,p) = (0,n,2)$. Lemma \ref{lift} implies that constructing an example for $(1,n-1,2)$ implies the existence of example for $(0,n,2)$, hence we proceed under the assumption that either $p\neq 2$ or $p=2$ and $m>0$. The case $(m,n,p) = (0,1,2)$ is an exception as one can check explicitly that there are no 2-dimensional Hadamard matrices. Hence it follows that $f$ is well-defined. Let $i:\ZZ/(p^m)\times \ZZ/(q) \rightarrow \ZZ/(p^mq)$ be the bijection $i(x,y) = x + p^my$. To finish the construction we suppose $z = i(x,y)$, show
\[
    f\big(z+k\big) - f\big(z\big) = \delta_k(x,y),
\]
and then our proof that $\delta_k$ is fibrous for all $k < p^mq$ implies $f$ is $\delta$-fibrous. Theorem \ref{equiv} translates this into the existence of a circulant $q$-BH matrix of dimension $p^{m+n}$. Now,
\begin{align*}
    f(z+k) - f(z) &= \delta_{z+k}(0,0) - \delta_z(0,0) = \sum_{z\leq j < z + k} S_j(0) = \sum_{j - z < k} \sum_{i-z < j-z} \chi(i)\\
    &= \sum_{j' < k} \sum_{i' < j'} \chi(x + i') = \sum_{j' < k} S_{j'}(x) = \delta_k(x,y).
\end{align*}
\end{proof}

\begin{remark}
Padraig Cathain brought the work of de Launey \cite{launey} to our attention after reading an initial draft. There one finds a construction of circulant $q$-Butson Hadamard matrices of dimension $q^2$ for all prime powers $q$ which appears to be closely related to our construction in Theorem \ref{construction}.
\end{remark}

\begin{example}
We provide two low dimensional examples to illustrate our construction. First we have an 8 dimensional circulant $4$-BH matrix.
\[
    \begin{pmatrix*}[r]
    1 & -1 & i & 1 & 1 & 1 & i & -1 \\
    -1 & 1 & -1 & i & 1 & 1 & 1 & i \\
    i & -1 & 1 & -1 & i & 1 & 1 & 1 \\
    1 & i & -1 & 1 & -1 & i & 1 & 1 \\
    1 & 1 & i & -1 & 1 & -1 & i & 1 \\
    1 & 1 & 1 & i & -1 & 1 & -1 & i \\
    i & 1 & 1 & 1 & i & -1 & 1 & -1 \\
    -1 & i & 1 & 1 & 1 & i & -1 & 1 \\
    \end{pmatrix*}
\]

\noindent Let $\omega$ be a primitive 3rd root of unity. The following is a 9 dimensional circulant $3$-BH matrix.
\[
    \begin{pmatrix*}[l]
    1 & \omega^2 & \omega & 1 & 1 & 1 & 1 & \omega & \omega^2 \\
    \omega^2 & 1 & \omega^2 & \omega & 1 & 1 & 1 & 1 & \omega \\
    \omega & \omega^2 & 1 & \omega^2 & \omega & 1 & 1 & 1 & 1 \\
    1 & \omega & \omega^2 & 1 & \omega^2 & \omega & 1 & 1 & 1 \\
    1 & 1 & \omega & \omega^2 & 1 & \omega^2 & \omega & 1 & 1 \\
    1 & 1 & 1 & \omega & \omega^2 & 1 & \omega^2 & \omega & 1 \\
    1 & 1 & 1 & 1 & \omega & \omega^2 & 1 & \omega^2 & \omega \\
    \omega & 1 & 1 & 1 & 1 & \omega & \omega^2 & 1 & \omega^2 \\
    \omega^2 & \omega & 1 & 1 & 1 & 1 & \omega & \omega^2 & 1 \\
    \end{pmatrix*}
\]
\end{example}

Corollary \ref{cor_fib} is an immediate consequence of our main results by Theorem \ref{equiv}.
\begin{cor}
\label{cor_fib}
If $q = p^n$ is a prime power and $d = p^{m+n}$, then there exists a $\delta$-fibrous function $f: \ZZ/(p^{m+n}) \rightarrow \ZZ/(q)$ iff $m \leq n$, with the one exception of $(m,n,p) = (0,1,2)$.
\end{cor}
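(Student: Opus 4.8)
The plan is to read this off directly from the equivalence of Theorem \ref{equiv} together with the two halves of the dimension result, Theorems \ref{main} and \ref{construction}. The point is that Theorem \ref{equiv} gives, for any fixed dimension $d$ and modulus $q = p^n$, a bijection between circulant $q$-BH matrices of dimension $d$ and $\delta$-fibrous functions $f : \ZZ/(d) \to \ZZ/(q)$. In particular, existence of one object is equivalent to existence of the other, so with $d = p^{m+n}$ every existence statement about circulant $q$-BH matrices of that dimension transfers verbatim to $\delta$-fibrous functions and back.

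For the forward implication I would assume a $\delta$-fibrous function $f : \ZZ/(p^{m+n}) \to \ZZ/(q)$ exists; Theorem \ref{equiv} then yields a circulant $q$-BH matrix of dimension $p^{m+n}$, and Theorem \ref{main} forces $m \le n$. For the converse, assume $m \le n$ with $(m,n,p) \ne (0,1,2)$. Theorem \ref{construction} constructs a circulant $q$-BH matrix of dimension $p^{m+n} = p^m q$, and the reverse direction of the correspondence in Theorem \ref{equiv} converts it into the desired $\delta$-fibrous function.

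The only point needing individual attention is the exceptional triple $(m,n,p) = (0,1,2)$, where $m = 0 \le 1 = n$ yet no $\delta$-fibrous function is asserted to exist. Here $q = 2$ and $d = p^{m+n} = 2$, so by Theorem \ref{equiv} such a function would correspond to a $2 \times 2$ circulant Hadamard matrix. I would rule this out by hand: a $2 \times 2$ circulant $\pm 1$ matrix has rows $(a_0, a_1)$ and $(a_1, a_0)$, whose inner product is $2 a_0 a_1 \ne 0$, so the orthogonality condition $HH^* = 2 I_2$ cannot hold (this nonexistence is already recorded in the proof of Theorem \ref{construction}). There is no genuine obstacle here, since all the substance lives in the earlier theorems and the corollary is pure translation through Theorem \ref{equiv}; the only care required is the bookkeeping needed to confirm that the single triple excluded by our construction, $(0,1,2)$, is precisely a case where existence truly fails rather than merely where the construction breaks down, which is why the explicit $2$-dimensional check is what keeps the equivalence sharp.
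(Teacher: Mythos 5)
Your proposal is correct and follows exactly the route the paper intends: the paper states Corollary \ref{cor_fib} as an immediate consequence of Theorems \ref{main} and \ref{construction} via the correspondence of Theorem \ref{equiv}, which is precisely your argument. Your explicit check that no $2\times 2$ circulant Hadamard matrix exists in the exceptional case $(m,n,p)=(0,1,2)$ matches the remark already made in the proof of Theorem \ref{construction}.
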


\subsection*{Closing remarks}
Our analysis focused entirely on the existence of circulant $p^n$-BH matrices with dimension $d$ a power of $p$. The number theoretic method of Theorem \ref{main} cannot be immediately adapted to the case where $d$ is not a power of $p$, although as we noted earlier, it may be possible to get some constraint with a closer analysis of the eigenvalues of a circulant matrix and the ramification over the primes dividing $d$ in the $d$th cyclotomic extension $\QQ(\zeta)/\QQ$.

The family of examples constructed in Theorem \ref{construction} was found empirically. It would be interesting to know if the construction extends to any dimensions which are not powers of $p$.

\end{document}